\documentclass[11pt,reqno]{amsart}
\usepackage[]{hyperref}
\usepackage{amsthm,amsfonts,amsmath,amssymb,enumerate,longtable}
\newtheorem{thm}{Theorem}[section]

\newtheorem{ques}[thm]{Question}
\theoremstyle{definition}
\newtheorem{defn}[thm]{Definition}
\newtheorem{rem}[thm]{Remark}
\newtheorem{exam}[thm]{Example}

\numberwithin{equation}{section}
\usepackage{graphicx}
\usepackage[
 a4paper,
 left=2.5cm,
 right=2.5cm,
 top=3cm,
 bottom=3 cm,
 ]{geometry}

\parskip8pt


\begin{document}
\title[Answers to Kirk-Shahzad's questions on strong $b$-metric spaces]{Answers to Kirk-Shahzad's questions on strong $b$-metric spaces}

\author{Tran Van An}

\address{Department of Mathematics, Vinh University, Vinh City,
Nghe An, Viet Nam}

\email{andhv@yahoo.com}

\author{Nguyen Van Dung}

\address[]{Faculty of Mathematics and Information Technology Teacher Education, Dong Thap University, Cao Lanh City, Dong Thap Province, Viet Nam}

\email{nvdung@dthu.edu.vn}

\subjclass[2000]{Primary 47H10, 54H25; Secondary 54D99, 54E99}%
\keywords{}

\begin{abstract} In this paper, two open questions on strong $b$-metric spaces posed by Kirk and Shahzad~\cite[Chapter~12]{KS2014-8} are investigated. A counter-example is constructed to give a negative answer to the first question, and a theorem on the completion of a strong $b$-metric space is proved to give a positive answer to the second question.
\end{abstract}

\maketitle

\section{Introduction and preliminaries}

In 1993, Czerwik~\cite{SC1993} introduced the notion of a $b$-metric which is a generalization of a metric with a view of generalizing the Banach contraction map theorem.

\begin{defn}[\cite{SC1993}] Let $X$ be a non-empty set and $d: X \times X \longrightarrow [0,+\infty) $ be a function such that for all $x,y,z \in X$,
\begin{enumerate} \item $d (x,y) = 0$ if and only if $x =y$.

\item $d(x,y) = d(y,x)$.

\item $d(x,z) \le 2[ d(x,y) + d(y,z)]$.
\end{enumerate}
Then $d$ is called a \emph{$b$-metric} on $X$ and $(X,d)$ is called a \emph{$b$-metric space}.
\end{defn}

After that, in~1998, Czerwik~\cite{SC1998} generalized this notion where the constant 2 was replaced by a constant $s \ge 1$, also with the name \emph{$b$-metric}. In~2010, Khamsi and Hussain~\cite{KH2010} reintroduced the notion of a $b$-metric under the name \emph{metric-type}.

\begin{defn}[\cite{KH2010}, Definition~6] \label{159-81} Let $X$ be a non-empty set, $K >0$ and $D: X \times X \longrightarrow [0,+\infty) $ be a function such that for all $x,y,z \in X$,
\begin{enumerate} \item $D (x,y) = 0$ if and only if $x =y$.

\item $D(x,y) = D(y,x)$.

\item \label{159-81-3} $D(x,z) \le K [ D(x,y) + D(y,z)]$.
\end{enumerate}
Then $D$ is called a \emph{metric-type} on $X$ and $(X,D,K)$ is called a \emph{metric-type space}.
\end{defn}

\begin{defn}[\cite{KH2010}, Definition~7] Let $(X,D,K)$ be a $b$-metric space.
\begin{enumerate} \item A sequence $\{ x_n\}$ is called \emph{convergent} to $x$, written as $\lim\limits_{n\rightarrow\infty} x_n = x$, if $\lim\limits_{n\rightarrow\infty} D(x_n,x) =0$.

\item A sequence $\{ x_n\}$ is called \emph{Cauchy} if $\lim\limits_{n,m\rightarrow\infty} D(x_n,x_m) = 0$.

\item $(X,D,K)$ is called \emph{complete} if every Cauchy sequence is a convergent sequence.
\end{enumerate}
\end{defn}

From Definition~\ref{159-81}.\eqref{159-81-3}, it is easy to see that $K \ge 1$.
Also in~2010, Khamsi~\cite{MAK2010} introduced another definition of a metric-type where the condition~\eqref{159-81-3} in Definition~\ref{159-81} was replaced by $$D(x,z) \le K [ D(x,y_1) + \ldots + D(y_n,z)]$$ for all $x,y_1, \ldots , y_n,z \in X$, see \cite[Definition~2.7]{MAK2010}.
In the sequel, the metric-type in the sense of Khamsi and Hussain~\cite{KH2010} will be called a $b$-metric to avoid the confusion about the metric-type in the sense of Khamsi~\cite{MAK2010}. Note that every metric-type is a $b$-metric.

The same relaxation of the
triangle inequality in Definition~\ref{159-81} was also discussed in~2003 by Fagin \emph{et al.}~\cite{FKS2003}, who called this new
distance measure nonlinear elastic matching. The authors of that paper remarked that this measure had been used, for example, in~\cite{FS1998} for trademark shapes and in~\cite{CKCKP1991} to measure ice floes. In~2009, Xia~\cite{qX2009} used this semimetric distance to study the optimal transport path between probability
measures.

In recent times, $b$-metric spaces were studied by many authors, especially fixed point theory on $b$-metric spaces~\cite{ADKR2015}, \cite{DLTH2013}, \cite{HD2015},  \cite[Chapter~12]{KS2014-8}, \cite{KDH2013}. Some authors were also studied topological properties of $b$-metric spaces. In~\cite{ATD2015}, An \emph{et al.} showed that every $b$-metric space with the topology induced by its convergence is a semi-metrizable space and thus many properties of $b$-metric spaces used in the literature are obvious. Then, the authors proved the Stone-type theorem on $b$-metric spaces and get a sufficient condition for a $b$-metric space to be metrizable. Notice that a $b$-metric space is always understood to be a topological space with respect to the topology induced by its convergence and a $b$-metric need not be continuous \cite[Examples~3.9 \& 3.10]{ATD2015}. This fact suggests a strengthening of the notion of $b$-metric spaces which remedies this defect.

\begin{defn}[\cite{KS2014-8}, Definition~12.7] Let $X$ be a non-empty set, $K \ge 1$ and $D: X \times X \longrightarrow [0,+\infty) $ be a function such that for all $x,y,z \in X$,
\begin{enumerate} \item $D (x,y) = 0$ if and only if $x =y$.

\item $D(x,y) = D(y,x)$.

\item \label{159-81-3} $D(x,z) \le D(x,y) + KD(y,z)$.
\end{enumerate}
Then $D$ is called a \emph{strong $b$-metric} on $X$ and $(X,D,K)$ is called a \emph{strong $b$-metric space}.
\end{defn}

\begin{rem}[\cite{KS2014-8}, page 122] \label{180-87}
\begin{enumerate} \item Every strong $b$-metric is continuous.

\item  \label{180-87-2} Every open ball $B(a,r) = \{x \in X: D(a,x) <r \} $ of a strong $b$-metric space $(X,D,K)$ is open.
\end{enumerate}
\end{rem}

In \cite[Chapter~12]{KS2014-8}, Kirk and Shahzad surveyed $b$-metric spaces, strong $b$-metric spaces, and related problems. An interesting work was attracted many authors is to transform results of metric spaces to the setting of $b$-metric spaces. It is only fair to point out that some results seem to require the full use of the triangle inequality of a metric space. In this connection, Kirk and Shahzad \cite[page 127]{KS2014-8} mentioned an interesting extension of Nadler's theorem due to Dontchev and Hager~\cite{DH1994}. Recall that for a metric space $(X,d)$ and $A, B \subset X$, $x \in X$,  $$ \mathrm{ dist}(x,A) = \inf \{ d(x,a): a \in A \}  $$ $$\delta (A,B) = \sup \{ \mathrm{ dist}(x,A): x \in B\}$$ and  these notation are understood similarly on $b$-metric spaces.

\begin{thm}[\cite{KS2014-8}, Theorem~12.7] \label{182-99} Let $(X,d)$ be a complete metric space, $T: X \longrightarrow X$ be a map from $X$ into a non-empty closed subset of $X$, and $x_0 \in X$ such that
\begin{enumerate} \item \label{182-99-1} $\mathrm{ dist} (x_0,Tx_0) < r(1 -k)$ for some $r>0$ and some $k \in [0,1)$.

\item \label{182-99-2} $\delta (Tx \cap B(x_0,r), Ty) \le k d(x,y)$ for all $ x,y \in B(x_0,r)$.
\end{enumerate}
Then $T$ has a fixed point in $B(x_0,r)$.
\end{thm}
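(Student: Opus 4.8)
The plan is to run a Nadler-type Picard iteration, interpreting $T$ as a set-valued map with non-empty closed values (so that $\mathrm{dist}(x,Tx)$ and $\delta(Tx\cap B(x_0,r),Ty)$ are meaningful), and to exploit two features of the hypotheses: the one-sided excess functional $\delta$ in \eqref{182-99-2}, and above all the intersection $\cap\,B(x_0,r)$ occurring there, which will automatically confine every iterate to the ball. First I would use \eqref{182-99-1} together with the definition of $\mathrm{dist}$ as an infimum to pick $x_1\in Tx_0$ with $d(x_0,x_1)<r(1-k)$; since $r(1-k)\le r$, this already forces $x_1\in B(x_0,r)$. I would then fix once and for all a summable sequence of tolerances $\epsilon_n>0$ with $\sum_{n\ge1}\epsilon_n< r(1-k)-d(x_0,x_1)$, the right-hand side being positive by the strict inequality in \eqref{182-99-1}.

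Next I would construct $\{x_n\}$ inductively. Given $x_{n-1},x_n\in B(x_0,r)$ with $x_n\in Tx_{n-1}$, I apply \eqref{182-99-2} with $x=x_n$ and $y=x_{n-1}$. Because $x_n\in Tx_{n-1}$, the definition of $\delta$ yields $\mathrm{dist}(x_n,Tx_n\cap B(x_0,r))\le k\,d(x_{n-1},x_n)$; as the right-hand side is finite, the set $Tx_n\cap B(x_0,r)$ is non-empty, and I may choose $x_{n+1}\in Tx_n\cap B(x_0,r)$ with $d(x_n,x_{n+1})\le k\,d(x_{n-1},x_n)+\epsilon_n$. The membership $x_{n+1}\in B(x_0,r)$ is automatic from the intersection, which is exactly what keeps the induction alive, since it guarantees that both arguments of \eqref{182-99-2} remain in the ball at the next step.

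Writing $t_n=d(x_n,x_{n+1})$, the recursion $t_n\le k\,t_{n-1}+\epsilon_n$ summed geometrically gives $\sum_{n\ge0}t_n\le (1-k)^{-1}\bigl(t_0+\sum_{n\ge1}\epsilon_n\bigr)<r$ by the choice of the $\epsilon_n$. Thus $\{x_n\}$ has finite total length, hence is Cauchy, and by completeness converges to some $x^*$ with $d(x_0,x^*)\le\sum_n t_n<r$, so $x^*\in B(x_0,r)$. To identify $x^*$ as a fixed point I would apply \eqref{182-99-2} once more with $x=x^*$ and $y=x_n$ (legitimate since both lie in $B(x_0,r)$): from $x_{n+1}\in Tx_n$ I obtain $\mathrm{dist}(x_{n+1},Tx^*\cap B(x_0,r))\le k\,d(x^*,x_n)$, so there exist $z_n\in Tx^*$ with $d(x_{n+1},z_n)\to0$; since also $x_{n+1}\to x^*$, this forces $z_n\to x^*$, and closedness of $Tx^*$ gives $x^*\in Tx^*$.

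The main obstacle is the bookkeeping that keeps the entire orbit, and its limit, strictly inside the open ball $B(x_0,r)$. This is precisely where the calibrated constant $r(1-k)$ in \eqref{182-99-1} must be used: the tolerances $\epsilon_n$ have to be summable with sum below the slack $r(1-k)-d(x_0,x_1)$, so that the geometric estimate for $\sum_n t_n$ stays below $r$ rather than merely finite. The companion subtlety, which I would emphasize, is that the intersection $\cap\,B(x_0,r)$ in \eqref{182-99-2} does the work of confining each iterate to the ball, so that the hypothesis can be reapplied at every step without any separate triangle-inequality argument to verify membership.
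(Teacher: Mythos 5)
A point of order first: the paper never proves Theorem~\ref{182-99}. It is quoted from \cite[Theorem~12.7]{KS2014-8} (the result is due to Dontchev and Hager~\cite{DH1994}) purely as motivation for Question~\ref{182-98}, so your argument can only be measured against the standard Dontchev--Hager proof and against the interpretive remark the paper makes immediately after the statement. With that caveat: your set-valued interpretation of $T$ is the right one, and your iteration is internally sound \emph{under the literal reading} of hypothesis~\eqref{182-99-2}, in which $\mathrm{dist}(z,\emptyset)=+\infty$, so that $\delta(Tx\cap B(x_0,r),Ty)\le k\,d(x,y)<\infty$ by itself forces $Tx\cap B(x_0,r)\ne\emptyset$ for every $x\in B(x_0,r)$. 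Granting that reading, every step checks out: the choice of $x_1$, the calibration $\sum_n\epsilon_n<r(1-k)-d(x_0,x_1)$, the bound $\sum_n t_n\le(1-k)^{-1}\bigl(t_0+\sum_n\epsilon_n\bigr)<r$, the limit $x^*$ landing in the open ball, and the closedness argument giving $x^*\in Tx^*$.

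The genuine problem is that this is exactly the reading the paper disavows: directly below the theorem the authors stipulate that \eqref{182-99-2} is ``implicitly understood'' to apply only to those $x$ with $Tx\cap B(x_0,r)\ne\emptyset$, and that weakened reading is the one on which Question~\ref{182-98} and Example~\ref{182-97} are built. Under it, your pivotal inference --- ``the right-hand side is finite, hence $Tx_n\cap B(x_0,r)$ is non-empty'' --- is circular, since the hypothesis may not be invoked with first argument $x_n$ before that non-emptiness is known. The gap is not patchable within your scheme, because with the paper's convention $\delta(A,B)=\sup\{\mathrm{dist}(z,A):z\in B\}$ the weakened statement is outright false even for genuine metrics: on $X=\{0,0.4,0.6,0.9,0.95,1.02,1.1\}\subset\mathbb{R}$ with the usual distance, $x_0=0$, $r=1$, $k=1/2$, the map $T0=\{0.4,0.9\}$, $T(0.4)=\{0.6,0.95\}$, $T(0.6)=\{1.02\}$, $T(0.9)=T(0.95)=\{1.1\}$, $T(1.02)=\{1.02\}$, $T(1.1)=\{1.1\}$ satisfies~\eqref{182-99-1}, satisfies the weakened~\eqref{182-99-2} (the only admissible first arguments are $x\in\{0,0.4\}$, and all ten pairs check), yet has no fixed point in $B(0,1)$. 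What Dontchev--Hager actually prove bounds the excess in the \emph{opposite} direction, namely $\sup\{\mathrm{dist}(z,Ty):z\in Tx\cap B(x_0,r)\}\le k\,d(x,y)$ with the convention that this is vacuous when the intersection is empty; their proof invokes the hypothesis at $x_{n-1}$ --- where non-emptiness is witnessed by $x_n$ --- and then uses the full triangle inequality $d(x_0,x_{n+1})\le\sum_{j\le n}d(x_j,x_{j+1})<r$ to push $x_{n+1}$ back into the ball, so non-emptiness of $Tx_n\cap B(x_0,r)$ is a conclusion rather than an assumption. That confinement step --- the very one you advertise as unnecessary --- is what Kirk and Shahzad single out as requiring the full triangle inequality, and its failure for strong $b$-metrics is the entire point of Question~\ref{182-98} and of the paper's Example~\ref{182-97}. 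In short: you have correctly proved the statement as printed, but it is a different theorem from the one the paper and Dontchev--Hager intend, and your argument collapses under the interpretation the rest of the paper uses.
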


Based on the definition of $\delta(A,B)$ and the proof of Theorem \cite[Theorem~12.7]{KS2014-8}, the assumption~\eqref{182-99-2} in the above theorem is implicitly understood as

\emph{$\delta (Tx \cap B(x_0,r), Ty) \le k d(x,y)$ for all $ x,y \in B(x_0,r)$ and $Tx \cap B(x_0,r) \ne \emptyset $}.

The authors of~\cite{KS2014-8} did not know whether Theorem~\ref{182-99} holds under the weaker strong $b$-metric assumption. Explicitly, we have the following question.

\begin{ques}[\cite{KS2014-8}, page 128]  \label{182-98} Let $(X,D,K)$ be a complete strong $b$-metric space, $T: X \longrightarrow X$ be a map from $X$ into a non-empty closed subset of $X$, and $x_0 \in X$ such that
\begin{enumerate} \item $\mathrm{ dist} (x_0,Tx_0) < r(1 -k)$ for some $r>0$ and some $k \in [0,1)$.

\item $\delta (Tx \cap B(x_0,r), Ty) \le k  D(x,y)$ for all $ x,y \in B(x_0,r)$ and $Tx \cap B(x_0,r) \ne \emptyset$.
\end{enumerate}
Does the map $T$ have a fixed point in $B(x_0,r)$?
\end{ques}

Recall that a map $f: X \longrightarrow Y$ from a $b$-metric space $(X,D,K)$ into a $b$-metric space $(Y,D',K')$ is called an \emph{isometry} if $D'(f(x),f(y)) = D(x,y)$ for all $x,y \in X$. Also, a $b$-metric space $(X ^*,D ^*, K ^*)$ is called a \emph{completion} of the $b$-metric space $(X,D,K)$ if $(X ^*,D ^*,K ^*)$ is compete and there exists an isometry $f: X \longrightarrow X ^*$ such that $\overline{ f(X)} = X ^*$. A classical result is that every metric space is dense in a complete metric space. So, it is interesting to ask whether this result holds or not in the setting of strong $b$-metric spaces.

\begin{ques}[\cite{KS2014-8}, page 128] \label{180-97} Is every strong $b$-metric space dense in a complete strong $b$-metric space?
\end{ques}

Kirk and Shahzad \cite[page 128]{KS2014-8} commented that if the answer of Question~\ref{180-97} is positive, then every contraction map $f:X \longrightarrow X$ on a strong $b$-metric space $X$ may be extended to a contraction map $f:X ^* \longrightarrow X ^*$ on a complete strong $b$-metric space $X ^*$
which has a unique fixed point.
Ostrowski's theorem \cite[Theorem~12.6]{KS2014-8} then would provide a method for approximating this
fixed point.

In this paper, two above questions on strong $b$-metric spaces are investigated. A counter-example is constructed to give a negative answer to Question~\ref{182-98}, and a theorem on the completion of a strong $b$-metric space is proved to give a positive answer to Question~\ref{180-97}.

\section{Main results}

First, the following example gives a negative answer to Question~\ref{182-98}.

\begin{exam} \label{182-97} Let $X = \{1,2,3 \}$, $D: X \times X \longrightarrow  [0, +\infty )$ be defined by
$$D(1,1) = D(2,2) = D(3,3) =0, D(1,2) = D(2,1) = 2, D(2,3) = D(3,2) = 1, D(1,3) = D(3,1) = 6$$ and a map $T: X \longrightarrow X$ be defined by $$T1 = 2, T2= 3, T3 = 1.$$ Then
\begin{enumerate} \item \label{182-97-1} $(X,D,K)$ is a complete strong $b$-metric space with $K = 4$.

\item \label{182-97-2} $T$ and $(X,D,K)$ satisfy all assumptions of Question~\ref{182-98} with $x_0 = 1$, $r = 6$, $k = \cfrac{ 1}{2}$.

\item \label{182-97-3} $T$ has no any fixed point.
\end{enumerate}
\end{exam}

\begin{proof} \eqref{182-97-1}. For all $x,y \in X$, it follows from definition of $D$ that
$D(x,y) = D(y,x) $ and

$D(x,y) = 0$ if and only if $x =y$.

We also have
$$ D(1,3) + K D(3,2) = 6 + 4. 1 = 10 \ge 2 = D(1,2)$$ $$ KD(1,3) +  D(3,2) =4. 6 + 1 = 25 \ge 2 = D(1,2)$$
$$ D(1,2) + K D(2,3) = 2 + 4. 1 = 6  =6 = D(1,3)$$ $$ KD(1,2) +  D(2,3) =4. 2 + 1 = 9 \ge 6 = D(1,3)$$
$$ D(2,1) + K D(1,3) = 2 + 4. 6 =  26 \ge 1= D(2,3)$$ $$ KD(2,1) +  D(1,3) =4. 2 + 6 = 14 \ge 1 = D(2,3).$$

By the above, $D$ is a strong $b$-metric on $X$. Since $X$ is finite and discrete, $X$ is complete.  So, $(X,D,K)$ is a complete strong $b$-metric space with $K = 4$.

\eqref{182-97-2}. Since $TX = X$, $TX$ is a non-empty closed subset of $X$.
We have $$\mathrm{ dist}(x_0,Tx_0) = \mathrm{ dist}(1, T1) = \mathrm{ dist}(1,\{2\}) = D(1,2) = 2 $$ and $$r(1-k) = 6 \big( 1 - \cfrac{ 1}{2}\big) = 3 .$$ This proves that $\mathrm{ dist} (x_0,Tx_0) < r(1 -k)$.

We also have $B(x_0,r) = B(1,6) = \{ 1,2\} $.

If $x = y =1$, then $Tx \cap B(x_0,r) = \{ 2\}$ and $$\delta (Tx \cap B(x_0,r), Ty) = \delta ( \{ 2\}, \{ 2\} )= D(2,2) = 0 \le k D(x,y).$$

If $x = y =2$, then $Tx \cap B(x_0,r) = \emptyset$.

If $x = 1, y = 2$, then $$\delta (Tx \cap B(x_0,r), Ty)  = \delta ( \{ 2\}, \{ 3\} ) = D(2,3) = 1 = \cfrac{ 1}{2} D(1,2) = k D(x,y)  . $$

If $x = 2,y =1$, then $Tx \cap B(x_0,r)= \emptyset.$

By the above, $\delta (Tx \cap B(x_0,r), Ty) \le k D(x,y)$ for all $ x,y \in B(x_0,r)$ and $Tx \cap B(x_0,r) \ne \emptyset$.

\eqref{182-97-3}. By definition of $T$, we see that $T$ has no any fixed point.
\end{proof}

Next, the following theorem is a positive answer to Question~\ref{180-97}.

\begin{thm} \label{159-80} Let $(X,D,K)$ be a strong $b$-metric space. Then
\begin{enumerate} \item $(X,D,K)$ has a completion $(X ^*,D ^*,K)$.

\item The completion of $(X,D,K)$ is unique in the sense that if $(X^*_1, D^*_1,K_1)$ and
$(X^{*}_2, D^*_2, K_2)$ are two completions of $(X, D,K)$, then there is a bijective isometry $ \varphi: X^*_1 \longrightarrow X^*_2$ which restricts to the identity on $X$.
\end{enumerate}
\end{thm}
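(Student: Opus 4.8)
The plan is to imitate the classical construction of the completion of a metric space via Cauchy sequences, checking at each step that the strong $b$-metric axioms survive the passage to the quotient with the \emph{same} constant $K$. First I would let $\mathcal{C}$ denote the set of all Cauchy sequences in $(X,D,K)$ and declare two such sequences $\{x_n\}$ and $\{y_n\}$ equivalent when $\lim_{n\to\infty} D(x_n,y_n) = 0$; transitivity of this relation is immediate from $D(x_n,z_n) \le D(x_n,y_n) + K D(y_n,z_n)$. Setting $X^* = \mathcal{C}/\!\sim$, I would define
\[
D^*\big([x_n],[y_n]\big) = \lim_{n\to\infty} D(x_n,y_n),
\]
embed $X$ into $X^*$ by sending $x$ to the class $f(x)$ of the constant sequence $(x,x,\dots)$, and aim to show that $(X^*,D^*,K)$ is a complete strong $b$-metric space in which $f(X)$ is dense.

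The step I expect to be the main obstacle---and the only place where the \emph{strong} inequality is genuinely needed---is showing that the defining limit exists. A naive estimate produces a term $(K-1)D(x_m,y_m)$ that does not vanish, so the argument valid for ordinary $b$-metrics breaks down (consistent with the failure of continuity for general $b$-metrics). The remedy is to exploit both directions of the strong triangle inequality: since $D$ is symmetric, $D(x,z) \le D(x,y) + K D(y,z)$ also yields $D(x,z) \le K D(x,y) + D(y,z)$. Chaining these two forms gives
\[
\big| D(x_n,y_n) - D(x_m,y_m)\big| \le K\big[ D(x_n,x_m) + D(y_n,y_m)\big],
\]
whose right-hand side tends to $0$ because $\{x_n\}$ and $\{y_n\}$ are Cauchy. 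Hence $\{D(x_n,y_n)\}$ is a Cauchy sequence of reals and converges; the same inequality shows the limit is independent of the chosen representatives, so $D^*$ is well defined.

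With the limit in hand, the remaining verifications are routine passages to the limit. Axiom $(1)$ for $D^*$ is exactly the definition of $\sim$, symmetry is inherited, and applying $\lim_n$ to $D(x_n,z_n) \le D(x_n,y_n) + K D(y_n,z_n)$ gives the strong triangle inequality for $D^*$ with the \emph{same} constant $K$, proving the completion carries the constant claimed in $(1)$. The map $f$ is an isometry since $D^*(f(x),f(y)) = D(x,y)$; density of $f(X)$ follows from the Cauchy property, as $D^*\big(f(x_m),[x_n]\big) = \lim_k D(x_m,x_k)$ is small once $m$ is large. Completeness of $X^*$ I would obtain by the usual diagonal argument: given a Cauchy sequence $\{\xi^{(n)}\}$ in $X^*$, pick $a_n \in X$ with $D^*(\xi^{(n)},f(a_n)) < 1/n$, use the strong triangle inequality to check that $\{a_n\}$ is Cauchy in $X$, and show $\xi^{(n)} \to [a_n]$.

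For the uniqueness in $(2)$, given completions $(X^*_1,D^*_1,K_1)$ and $(X^*_2,D^*_2,K_2)$ with dense isometric embeddings $f_1,f_2$ of $X$, I would define $\varphi$ on the dense set $f_1(X)$ by $\varphi(f_1(x)) = f_2(x)$ and extend it by continuity: for $\xi \in X^*_1$ choose $x_n$ with $f_1(x_n) \to \xi$, note $\{f_2(x_n)\}$ is Cauchy in the complete space $X^*_2$, and set $\varphi(\xi) = \lim_n f_2(x_n)$. Here the continuity of strong $b$-metrics (Remark~\ref{180-87}) is what makes this extension well defined and distance-preserving, since $D^*_2(\varphi\xi,\varphi\xi') = \lim_n D(x_n,x_n') = D^*_1(\xi,\xi')$. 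Injectivity is automatic for an isometry, and surjectivity follows because the image $\varphi(X^*_1)$ is complete and contains the dense set $f_2(X)$; by construction $\varphi$ restricts to the identity on (the copy of) $X$, which is the desired bijective isometry.
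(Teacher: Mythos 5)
Your proposal is correct and follows essentially the same route as the paper: the quotient of Cauchy sequences, the key two-sided estimate $|D(x_n,y_n)-D(x_m,y_m)| \le K[D(x_n,x_m)+D(y_n,y_m)]$ obtained from symmetry plus the strong triangle inequality, the constant-sequence embedding, the same density and completeness arguments, and the same continuity-based extension for uniqueness. The only differences are cosmetic (e.g.\ the paper picks approximants within $\frac{1}{Kn}$ rather than $\frac{1}{n}$, and argues surjectivity of $\varphi$ directly rather than via completeness of the image).
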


\begin{proof} Put $$\mathcal{ C} = \big\{ \{ x_n\}: \{ x_n\} \text{ is a Cauchy sequence in } (X,D,K) \big\} .$$
Define a relation $\sim $ on $\mathcal{ C}$ as follows:

$\{ x_n\} \sim \{y_n\} $ if and only if $\lim\limits_{n \rightarrow \infty} D(x_n,y_n) = 0$,
for all $\{ x_n\} $, $\{y_n\}$ $ \in \mathcal{ C}$.

The relation $\sim$ obviously satisfies reflexivity and symmetry. If $\{ x_n\} \sim \{y_n\} $ and $\{y_n\} \sim \{z_n\}$, then $\lim\limits_{n \rightarrow \infty} D(x_n,y_n) = \lim\limits_{n \rightarrow \infty} D(y_n,z_n) = 0$. Since $$0 \le D(x_n,z_n) \le D(x_n,y_n) + KD(y_n,z_n) $$ for all $n$, $\lim\limits_{n \rightarrow \infty} D(x_n,z_n) = 0$. Thus $\{ x_n\} \sim \{z_n\} $. Therefore, the relation $\sim$ is an equivalent relation on $\mathcal{ C}$.

Denote $$X ^* = \big\{x^* = [ \{ x_n\} ]: \{ x_n\} \in \mathcal{ C} \big\} $$ where $x^* = [\{ x_n\} ]$ is an equivalence class of $\{ x_n\}$ under the relation $\sim$, and define a function $D ^* : X ^* \times X ^* \longrightarrow \mathbb{R}$ by
\begin{equation} \label{180-95} D ^* (x ^* , y^* ) = \lim\limits_{n \rightarrow \infty} D(x_n,y_n).\end{equation} We see that, for all $n,m$
\begin{eqnarray*} D(x_n,y_n) &\le& K D(x_n,x_m) + D(x_m,y_n)\\
&\le &
K D(x_n,x_m) + D(x_m,y_m) + K D(y_m,y_n).
\end{eqnarray*} It implies that
\begin{equation} \label{180-99} D(x_n,y_n) -D(x_m,y_m) \le K \big[D(x_n,x_m) + D(y_m,y_n) \big] .
\end{equation}
Also
\begin{eqnarray*} D(x_m,y_m) &\le& K D(x_m,x_n) + D(x_n,y_n) \\
&\le& K D(x_n,x_m) + D(x_n,y_n) + K D(y_n,y_m).
\end{eqnarray*} Therefore,
\begin{equation} \label{180-98} D(x_m,y_m) -D(x_n,y_n) \le K \big[D(x_n,x_m) + D(y_m,y_n) \big] .
\end{equation}
It follows from~\eqref{180-99} and~\eqref{180-98} that
\begin{equation} \label{180-88}  | D(x_m,y_m) -D(x_n,y_n)| \le K \big[D(x_n,x_m) + D(y_m,y_n) \big].
\end{equation}
Taking the limit as $n,m \rightarrow \infty$ in~\eqref{180-88}, we get $\lim\limits_{ n,m \rightarrow \infty }| D(x_m,y_m) -D(x_n,y_n)| = 0$, that is, $\big\{ D(x_n,y_n) \big\} $ is a Cauchy sequence in $\mathbb{R}$. Thus $\lim\limits_{n \rightarrow \infty} D(x_n,y_n)$ exists.

Moreover, if $\{ x_n\} \sim \{z_n \}$ and $\{y_n\} \sim \{ w_n\}$, then
\begin{equation} \label{180-94} \lim\limits_{n \rightarrow \infty} D(x_n,z_n) = \lim\limits_{n \rightarrow \infty} D(y_n,w_n) = 0.
\end{equation} We see that
\begin{eqnarray*} D(x_n,y_n) & \le & K D(x_n,z_n) + D(z_n,y_n)\\
& \le & K D(x_n,z_n) + D(z_n,w_n) + K D(w_n,y_n).
\end{eqnarray*}
It implies that $$D(x_n,y_n) - D(z_n,w_n) \le K D(x_n,z_n) + K D(w_n,y_n).$$ Similarly, $$D(z_n,w_n) - D(x_n,y_n) \le K D(z_n,x_n) + K D(y_n,w_n).$$ Therefore,
\begin{equation} \label{180-93} |D(x_n,y_n) - D(z_n,w_n)| \le K D(x_n,z_n) + K D(w_n,y_n).
\end{equation}
Taking the limit as $n,m \rightarrow \infty$ in~\eqref{180-93} and using~\eqref{180-94}, we get $\lim\limits_{n \rightarrow \infty} |D(x_n,y_n) - D(z_n,w_n)| =0$. Thus $\lim\limits_{n \rightarrow \infty}D(x_n,y_n) = \lim\limits_{n \rightarrow \infty} D(z_n,w_n)$. Therefore, the function $D ^*$ is well-defined.

In the next, we shall prove that $(X ^*,D ^*,K) $ is a strong $b$-metric space. For all $x^*, y^* , z ^* \in X ^*$, we~have

$D ^*(x^*, y^* ) = \lim\limits_{n \rightarrow \infty} D(x_n,y_n) \ge 0$ since $D(x_n,y_n) \ge 0$ for all $n$.

$D ^* (x^*,y^*) = 0$ if and only if $\lim\limits_{n \rightarrow \infty} D(x_n,y_n) = 0$, that is, $\{ x_n\} \sim \{y_n\} $. It is equivalent to $x ^* = y^*$.

$D ^*(x^*,y^*) = \lim\limits_{n \rightarrow \infty} D(x_n,y_n) = \lim\limits_{n \rightarrow \infty} D(y_n,x_n) = D ^*(y^*, x^*)$ since $D(x_n,y_n) = D(y_n,x_n) $ for all~$n$.

$D ^*(x^*,z ^* ) = \lim\limits_{n \rightarrow \infty} D(x_n,z_n) \le \lim\limits_{n \rightarrow \infty} \big[D(x_n,y_n) + K D(y_n,z_n) \big] = D ^*(x^*, y^*) + K D ^*(y^*, z ^* )$.

So, $(X ^*,D ^*,K) $ is a strong $b$-metric space.

For each $x \in X$, put $f(x) = [ \{x,x,x, \ldots \}] \in X ^*$. We see that $f$ is an isometry from $(X,D,K)$ into $(X ^*,D ^*,K)$ since $$ D ^*\big( f(x), f(y) \big) = \lim\limits_{n \rightarrow \infty} D(x,y) = D(x,y)$$ for all $x,y \in X$.

Next, we will prove that $f(X)$ is dense in $X ^*$. If $x^* = [ \{ x_n\} ] \in X ^*$, then $\lim\limits_{n,m \rightarrow \infty} D(x_n,x_m) = 0$. For each $i \in \mathbb{N}$, there exists $n_0 ^ i$ such that $D(x_n,x_m) \le \cfrac{ 1}{i} $ for all $n,m \ge n_0 ^ i$. It implies that $$0 \le D ^* \big( f(x_{n_0 ^ i}), x^* \big) = \lim\limits_{n \rightarrow \infty} D(x_{n_0 ^ i}, x_n) \le \cfrac{ 1}{i}.$$ So $\lim\limits_{i \rightarrow \infty} D ^* \big( f(x_{n_0 ^ i}), x^* \big) = 0$. This proves that $\lim\limits_{i \rightarrow \infty} f(x_{n_0 ^ i}) = x^*$, that is, $f(X)$ is dense in $X ^*$.

Next, we will prove that $(X ^*,D ^*,K)$ is complete. Let $\{ x_n ^*\}$ be a Cauchy sequence in $X ^*$, where $x_n ^* = [ \{ x_i ^ n\}_i ]$ for some $\{ x_i ^ n\}_i \in \mathcal{ C}$. Then
\begin{equation} \label{180-91} \lim\limits_{n,m \rightarrow \infty} D ^* (x_n ^*,x_m ^* ) =0.
\end{equation}
Note that the open ball $B\big(x_n ^*, \cfrac{ 1}{Kn}\big)$ is open by Remark~\ref{180-87}.\eqref{180-87-2}. From the fact that $f(X)$ is dense in $X ^*$, for each $n$ there exists $y_n \in X$ such that
\begin{equation} \label{180-92} D ^* (f(y_n),x_n ^* ) < \cfrac{ 1}{Kn}.
\end{equation}
By~\eqref{180-92}, for all $n,m$, we have
\begin{eqnarray} \label{180-90} \nonumber D(y_n,y_m) & =& D ^* \big( f(y_n), f(y_m) \big) \\ \nonumber
& \le & K D ^*(f(y_n),x_n ^* ) + D ^*(x_n ^*,  f( y_m ) )\\ \nonumber
& \le & K D ^*(f(y_n),x_n ^* ) + D ^*(x_n ^*, x_m ^* ) +K D ^* (x_m ^* ,f( y_m ) )\\
& < & \cfrac{ 1}{n} + D ^*(x_n ^*, x_m ^* ) + \cfrac{ 1}{m} .
\end{eqnarray}
Taking the limit as $n,m \rightarrow \infty$ in~\eqref{180-90} and using~\eqref{180-91}, we get \begin{equation} \label{180-86} \lim\limits_{n,m \rightarrow \infty} D(y_n,y_m) = 0.
 \end{equation} Thus $\{y_n\}$ is a Cauchy sequence in $(X,D,K)$. Put $y^* = [\{y_n\} ] \in X ^*$. From~\eqref{180-92}, we have
\begin{eqnarray} \label{180-89} D ^* (x_n ^* ,y^* )
& \le & K D ^*(x_n ^* , f(y_n)) + D ^* (f(y_n), y^* ) \nonumber\\ \nonumber
& < & K \cfrac{ 1}{Kn} + \lim\limits_{m \rightarrow \infty} D(y_n,y_m)\\
& = & \cfrac{ 1}{n} + \lim\limits_{m \rightarrow \infty} D(y_n,y_m).
\end{eqnarray}
Taking the limit as $n \rightarrow \infty $ in~\eqref{180-89} and using~\eqref{180-86}, we have $\lim\limits_{n \rightarrow \infty} D ^* (x_n ^* ,y^* ) = 0$, that is, $\lim\limits_{n \rightarrow \infty} x_n ^* = y ^* $ in $(X ^*, D ^*, K)$. Therefore, $(X ^*, D ^*, K)$ is complete.

Finally, we prove the uniqueness of the completion. Let $(X^*_1, D^*_1,K_1)$ and
$(X^{*}_2, D^*_2, K_2)$ be two completions of $(X, D,K)$. For each $x^*_1 \in X_1 ^*$, there exists $\{ x_n \} \subset X$ such that $\lim\limits_{n \rightarrow \infty}f_1( x_n ) = x ^* _1$ where $f_1: X \longrightarrow X_1 ^*$ is an isometry. Since $\{f_1(x_n) \}$ is convergent, $\{f_1(x_n) \}$ is a Cauchy sequence in $X_1 ^*$. Since $f_1$ is an isometry, $\{x_n\}$ is a Cauchy sequence in $X$. Note that there exists $f_2: X \longrightarrow X_2 ^*$ which is also an isometry. Then $\{f_2 (x_n) \}$ is a Cauchy sequence in $X_2 ^*$ and thus there exists $x_2 ^* \in X_2 ^*$ such that  $\lim\limits_{n \rightarrow \infty} f_2(x_n) = x_2 ^*$. Define $\varphi: X_1 ^* \longrightarrow X_2 ^* $ by $\varphi (x_1 ^* ) = x_2 ^*$.

If $y_2 ^* \in X_2 ^*$, then $y_2 ^* = \lim\limits_{n \rightarrow \infty} f_2(y_n ) $ for some $\{ y_n\} \subset X $. Since $\{f_2(y_n) \}$ is convergent, $\{f_2(y_n) \}$ is a Cauchy sequence in $X_2 ^*$. Since $f_2$ is an isometry, $\{y_n\}$ is a Cauchy sequence in $X$. Also, $f_1$ is an isometry, $\{f_1( y_n)\}$ is a Cauchy sequence in $X_1 ^*$. Then there exists $y_1 ^* = \lim\limits_{n \rightarrow \infty}f_1( y_n).$ Therefore, $y_2 ^* = \varphi (y_1 ^*)$. This proves that $\varphi$ is bijective. Moreover, for every $x ^*, y^* \in X_1 ^*$ with $x^* = \lim\limits_{n \rightarrow \infty} f_1(x_n)$ and $y^* = \lim\limits_{n \rightarrow \infty} f_1(y_n)$, by using the continuity of $D_1 ^* $ and $D_2 ^*$,  we have $$ D_1 ^*(x^*, y^*) = \lim\limits_{n \rightarrow \infty} D_1 ^*(f_1(x_n) ,f_1(y_n)) = \lim\limits_{n \rightarrow \infty} D(x_n ,y_n)$$ $$ = \lim\limits_{n \rightarrow \infty} D_2 ^*(f_2(x_n) ,f_2(y_n)) = D_2 ^*(\varphi (x^*), \varphi (y^*)).$$
It implies that $\varphi$ is a bijective isometry $ \varphi: X^*_1 \longrightarrow X^*_2$ which restricts to the identity on~$X$.
\end{proof}

Finally, the following example shows that techniques used in the proof of Theorem~\ref{180-99} may not be applied to $b$-metric spaces.

\begin{exam}\label{180-96} Let $X = \big\{0,1, \cfrac{ 1}{2}, \ldots, \cfrac{ 1}{n}, \ldots \big\} $ and $$ D(x,y) = \left\{\begin{array}{ll}0 &\hbox{ if } x =y\\ 1 &\hbox{ if } x \ne y \in \{0,1 \}\\
|x -y| & \hbox{ if } x \ne y \in \{ 0\} \cup \big\{\cfrac{ 1}{2n}: n =1,2,\ldots \big\} \\
 4& \hbox{ otherwise. }
 \end{array}\right. $$
Then $D$ is a $b$-metric on $X$ with $K = \cfrac{ 8}{3}$ \cite[Example~3.9]{ATD2015}. Put $x_n =1$, $y_n = \cfrac{ 1}{2n} $, $z_n = 1$ and $w_n = 0$ for all $n$. Then $\{ x_n\}$, $\{y_n\}$, $\{ z_n\}$, $\{ w_n\}$ are Cauchy sequences and $\{ x_n\} \sim \{z_n \}$ and $\{y_n\} \sim \{ w_n\}$. However, $$\lim\limits_{n \rightarrow \infty} D(x_n,y_n) = \lim\limits_{n \rightarrow \infty} D\big(1, \cfrac{ 1}{2n} \big) = 4 \ne 1 = D(1,0) = \lim\limits_{n \rightarrow \infty} D(z_n,w_n).$$
This shows that the formula~\eqref{180-95} is not well-defined for the above $b$-metric $D$.
\end{exam}

Though the above example shows that that techniques used in the proof of Theorem~\ref{180-99} may not be applied to $b$-metric spaces, we do not know whether Theorem~\ref{159-80} fully extends to $b$-metric spaces. So, we conclude with the following question.

\begin{ques} Does every $b$-metric space have a completion?
\end{ques}


\end{document}